\definecolor{light-blue}{rgb}{0.8,0.85,1}
\definecolor{light-red}{rgb}{1,.4,.4}
\definecolor{purp}{rgb}{.7,.3,1}
\definecolor{yel}{rgb}{1,1,.5}
\definecolor{cy}{rgb}{0,1,1}
\newtheorem{theorem}{Theorem}
\theoremstyle{definition}
\newtheorem{definition}[theorem]{Definition}
\newtheorem{remark}[theorem]{Remark}
\newcommand{\co}{\colon\,}
\newcommand{\bT}{\mathbb T}
\newcommand{\bR}{\mathbb R}
\newcommand{\bC}{\mathbb C}
\newcommand{\bZ}{\mathbb Z}
\newcommand{\bP}{\mathbb P}
\newcommand{\cA}{\mathcal A}
\newcommand{\cE}{\mathcal E}
\newcommand{\cH}{\mathcal H}
\newcommand{\cO}{\mathcal O}
\newcommand{\cS}{\mathcal S}
\newcommand{\fg}{\mathfrak g}
\newcommand{\Th}{\Theta}
\newcommand{\U}{\mathop{\rm U}}
\newcommand{\GL}{\mathop{\rm GL}}
\newcommand{\Sp}{\mathop{\rm Sp}}
\newcommand{\lp}{\textup{(}}
\newcommand{\rp}{\textup{)}}
\newcommand{\End}{\operatorname{End}}
\newcommand{\Ch}{\operatorname{Ch}}
\newcommand{\Tr}{\operatorname{Tr}}
\renewcommand\Re{\operatorname{Re}}
\renewcommand\Im{\operatorname{Im}}
\newcommand{\nc}{noncommutative}
\begin{document}
\title[Noncommutative Riemann-Roch]{The Riemann-Roch Theorem\\
on higher dimensional\\ complex noncommutative tori}
\author{Varghese Mathai}
\address{Department of Pure Mathematics\\
School of Mathematical Sciences\\
University of Adelaide\\
Adelaide, SA 5005, Australia} 
\email[Varghese Mathai]{mathai.varghese@adelaide.edu.au}
\author{Jonathan Rosenberg}
\address{Department of Mathematics\\
University of Maryland\\
College Park, MD 20742-4015, USA} 
\email[Jonathan Rosenberg]{jmr@math.umd.edu}
\thanks{VM partially supported by
ARC grants FL170100020 and DP170101054.  
JR partially supported by NSF grant DMS-1607162.  
JR also thanks VM and the University of Adelaide for their
hospitality during a research visit in July 2019.}
\begin{abstract}
  We prove analogues of the Riemann-Roch Theorem and the Hodge
  Theorem for {\nc} tori (of any dimension) equipped with
  complex structures, and discuss implications for the
  question of how to distinguish ``{\nc} abelian varieties''
  from ``non-algebraic'' {\nc} complex tori.
\end{abstract}
\keywords{noncommutative torus, abelian variety, index theory,
  Riemann-Roch Theorem, Hodge Theorem}
\subjclass[2010]{Primary 46L87; Secondary 19L10
  58B34 58J42 53C55 57R20 14K20}

\maketitle

\section*{Introduction}
\label{sec:intro}

There is by now a quite extensive literature on {\nc} differential
geometry on {\nc} tori, and a much smaller literature on {\nc}
complex analytic geometry for {\nc} complex tori, which is
still in its infancy except for the case where the complex dimension is
$1$, in which case most of the obvious problems have been settled
by work of Schwarz and Polishchuk
\cite{MR1977884,MR2054986,MR2330589,MR2247860}.

The purpose of this paper is to study {\nc} complex tori of
arbitrary complex dimension, and in particular to try to find
{\nc} analogues of the Riemann-Roch Theorem, the Hodge Theorem
for the Dolbeault complex, and the Riemann characterization of
abelian varieties within the class of all complex tori.
Formulation of the main  theorems is done in Section \ref{sec:setup}.
In Section \ref{sec:mainproof}, we give the proofs of the
Riemann-Roch Theorem and the Hodge Theorem.  In Section
\ref{sec:cases}, we examine various special cases and give
{\nc} analogues (admittedly still imperfect) for the existence of
non-algebraic complex tori in complex dimension $>1$ and for
the characterization of abelian varieties via Riemann forms.

\section{General Setup}
\label{sec:setup}

Let $\Th$ be a skew-symmetric real matrix, say of size $d \times d$.  The 
\emph{noncommutative torus} $A_\Th$ is the universal $C^*$-algebra with 
$d$ unitary generators $U_j$, $1\le j\le d$, subject to the basic commutation
relation
\[
U_jU_k = e^{2\pi i \Theta_{jk}} U_kU_j.
\]
This algebra carries a \emph{gauge action} of $\bT^d$ via 
\[
t\cdot \left(U_1^{n_1}\cdots U_d^{n_d} \right)
= t_1^{n_1}\cdots t_d^{n_d} U_1^{n_1}\cdots U_d^{n_d} , \quad
t = (t_1, \cdots, t_d)\in \bT^d.
\]
There are associated infinitesimal generators $\delta_j$, 
which are $*$-derivations (really defined on
the smooth subalgebra $\cA_\Th$, which we will introduce shortly),
with $\delta_j$ sending $U_j$ to $2\pi i U_j$ and sending the other
$U_k$, $k\ne j$, to $0$.  Together, these define a tangent space or
(commutative) Lie algebra $\fg = \text{span}\,(\delta_1,\cdots, \delta_d)$.
The algebra $A_\Th$ also carries a canonical tracial state $\Tr$ invariant 
under the gauge action, sending $1$ to $1$ and sending a monomial
$U_1^{n_1}\cdots U_d^{n_d}$ to $0$ unless all of the $n_j$ vanish.
Because of the commutation relation, any element of
$A_\Th$ has a canonical (formal) expansion in terms of
the monomials $U_1^{n_1}\cdots U_d^{n_d}$.  This formal expansion
may not converge in the $C^*$-norm, but there is a canonical
\emph{smooth subalgebra} $\cA_\Th$ consisting of elements of the form
$\sum_{n\in \bZ^d} c_n U^n$ (here $U^n$ is a shorthand for
$U_1^{n_1}\cdots U_d^{n_d}$), for which the coefficients
$c_n$ are rapidly decreasing, i.e., form a sequence in the Schwartz
space $\cS(\bZ^d)$.  The smooth subalgebra $\cA_\Th$ is exactly the
algebra of $C^\infty$ vectors for the gauge action of $\bT^d$, and plays
the role of the algebra of $C^\infty$ functions (with which it
coincides if $\Th=0$). Since we will want to
use methods of differential geometry, we will almost always work 
with $\cA_\Th$ in place of its $C^*$-completion $A_\Th$.
\begin{definition}
\label{def:complexstr}
Let $A_\Th$ and $\cA_\Th$ be as above, and assume that the ``dimension''
$d=2n$ is even.  We will refer to $\fg = \bR\delta_1+\cdots +
\bR\delta_{2n}$ as the \emph{tangent space}.
A \emph{complex structure} on $A_\Th$ and $\cA_\Th$
is a choice of an endomorphism $J$ of $\fg$ satisfying $J^2 = -1$.
It thus defines an isomorphism $\fg_\bC\cong \fg^{\text{hol}}
\oplus \fg^{\text{antihol}}$
as a direct sum of holomorphic and antiholomorphic tangent spaces,
namely the $\pm i$-eigenspaces of $J$.  There is a similar splitting
of the complexified cotangent space $\fg_\bC^*$.
The pair $(A_\Th, J)$ will be called a \emph{{\nc} complex torus}
of complex dimension $n$.
\end{definition}
If $J$ is as in Definition \ref{def:complexstr}, then the standard $n$-torus
$\bT^{2n}=\bR^{2n}/\bZ^{2n}$, together with the complex structure
given by $J$, can equivalently be viewed as $\bC^n/\Lambda$,
with the standard complex structure on $\bC^n$ (given by multiplication by $i$),
identified with the $+i$-eigenspace of $J$,
but with $\Lambda$ now a ``skewed'' lattice in $\bC^n$ given by
$\Lambda=Q\cdot\bZ^{2n}$, where $Q$ is an $n\times 2n$ matrix
with values in $\bC$.  Then a famous classical theorem, due basically
to Riemann, though not rigorously proved until later in the 19th 
century\footnote{\cite{MR3525914} gives a detailed discussion of the history.}, is
\begin{theorem}[{Riemann Period Relations \cite[Theorem 2.3]{MR3525914}}]
\label{thm:Riemann}
The complex torus $\bC^n/\Lambda$, $\Lambda=Q\cdot\bZ^{2n}$, is a 
complex abelian variety if and only if $Q$ satisfies the Riemann conditions,
i.e., after suitable change of basis, it can be put in the form $(\Omega, I_n)$
with $\Omega\in M_n(\bC)$ symmetric with positive-definite imaginary part.
\end{theorem}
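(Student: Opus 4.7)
The plan is to reduce the problem to the existence of a \emph{polarization} and then translate its existence into matrix conditions on $Q$. The central bridge is the classification of holomorphic line bundles on a complex torus: the Appell–Humbert theorem identifies isomorphism classes of line bundles on $X=\bC^n/\Lambda$ with pairs $(H,\chi)$, where $H$ is a Hermitian form on $\bC^n$ whose imaginary part $E=\Im H$ is $\bZ$-valued on $\Lambda\times\Lambda$, and $\chi\co\Lambda\to U(1)$ is a semicharacter for $E$. The first Chern class of $L=L(H,\chi)$ is represented by $E$, and $L$ is ample precisely when $H$ is positive definite. Thus $X$ is an abelian variety iff such a positive $H$ (equivalently, a Riemann form $E$) exists.

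For the direction $(\Leftarrow)$, suppose $Q=(\Omega,I_n)$ with $\Omega=\Omega^T$ and $\Im\Omega>0$. I would set $H(z,w)=z^T(\Im\Omega)^{-1}\overline{w}$ and check directly that $H>0$ and that $E=\Im H$ takes integer values on $\Lambda=(\Omega,I_n)\bZ^{2n}$; this is a short computation using the symmetry of $\Omega$. Global sections of the resulting $L(H,\chi)$ are then produced by the classical Riemann theta series $\theta(z;\Omega)=\sum_{m\in\bZ^n}e^{\pi i m^T\Omega m+2\pi i m^Tz}$, and one invokes the Lefschetz theorem (that $L^{\otimes 3}$ is very ample when $L$ is positive on an abelian variety) to obtain a holomorphic closed embedding of $X$ into projective space; Chow's theorem then finishes the algebraicity.

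For the direction $(\Rightarrow)$, I would start with an ample line bundle $L$ on $X$ coming from a projective embedding. Appell–Humbert gives a positive Hermitian form $H$ on $\bC^n$ with $E=\Im H$ non-degenerate, $\bZ$-valued, and alternating on $\Lambda\cong\bZ^{2n}$. The Frobenius normal form theorem for alternating forms over $\bZ$ provides a $\bZ$-basis of $\Lambda$ in which $E$ has matrix $\left(\begin{smallmatrix}0 & D\\-D & 0\end{smallmatrix}\right)$, $D=\diag(d_1,\dots,d_n)$ with $d_i\mid d_{i+1}$; in this basis $Q=(Q_1,Q_2)$, and by applying a $GL_n(\bC)$ change of coordinates on $\bC^n$ one reduces to $Q_2=D$. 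Unwinding the Riemann form conditions $E(Jx,Jy)=E(x,y)$ and $E(x,Jx)>0$ in terms of $Q_1$ and $D$ yields the two classical Riemann relations $Q_1D^{-1}Q_1^T=0$ and $iQ_1D^{-1}\overline{Q_1}^T>0$, and in the principally polarized case $D=I_n$ this is precisely the statement that $\Omega:=Q_1$ is symmetric with $\Im\Omega>0$.

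The principal analytic obstacle is the construction of enough sections in $(\Leftarrow)$; this rests on the convergence and quasi-periodicity properties of $\theta$ and on Lefschetz's section theorem. The principal arithmetic obstacle is the normalization step in $(\Rightarrow)$: combining the elementary divisor form of $E$ with the $GL_n(\bC)$ action on the cotangent space so as to simultaneously diagonalize the $\bZ$-lattice and the complex structure is where the bookkeeping becomes delicate, and reducing the general diagonal $D$ to the principally polarized normal form $I_n$ requires passing to an isogenous torus.
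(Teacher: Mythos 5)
The paper does not prove this statement at all --- it is quoted as a classical result with a citation standing in for the proof --- so there is no in-paper argument to compare against. Your sketch is the standard textbook route (Appell--Humbert plus theta series and the Lefschetz embedding theorem for sufficiency; an ample line bundle, the Frobenius normal form of its Chern class, and the bilinear relations for necessity), and the architecture is correct. One displayed formula is garbled, however: the bilinear relations are conditions on the \emph{full} period matrix, $Q E^{-1} Q^{T}=0$ and $iQ E^{-1}\overline{Q}^{T}>0$ with $E=\bigl(\begin{smallmatrix}0&D\\-D&0\end{smallmatrix}\bigr)$, which unwind to the symmetry of $Q_1D^{-1}Q_2^{T}$ and positivity of $i\bigl(Q_2D^{-1}\overline{Q_1}^{T}-Q_1D^{-1}\overline{Q_2}^{T}\bigr)$; only after normalizing $Q_2=D$ do these become ``$\Omega$ symmetric'' and ``$\Im\Omega>0$.'' As written, your relation $Q_1D^{-1}Q_1^{T}=0$ would (for $D=I_n$) read $\Omega\Omega^{T}=0$, which is not the symmetry of $\Omega$. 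You draw the right conclusion, so this is a bookkeeping slip rather than a conceptual error, but it must be repaired.

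The issue you flag in your last sentence is the one real gap, and it deserves more than an aside. The argument for $(\Rightarrow)$ terminates naturally at a period matrix $(\Omega,D)$ with $D=\diag(d_1,\dots,d_n)$ the type of the chosen polarization, and no change of basis in $\GL(n,\bC)\times\GL(2n,\bZ)$ will in general convert this to $(\Omega',I_n)$ with $\Omega'$ symmetric and $\Im\Omega'>0$: such a normal form is \emph{equivalent} to the existence of a principal polarization, which an abelian variety of dimension $n\ge 2$ need not admit. So either the theorem must be read with a general diagonal $D$ in place of $I_n$ (as in Griffiths--Harris), or ``suitable change of basis'' must be understood to permit replacing $\Lambda=\Omega\bZ^{n}+D\bZ^{n}$ by the isogenous lattice $\Omega\bZ^{n}+\bZ^{n}$. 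Your proof should state explicitly which version it establishes; as it stands, the necessity direction proves a weaker (general-$D$) statement than the one asserted.
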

Note that the condition of Theorem \ref{thm:Riemann} always holds in
complex dimension $1$, since any lattice in $\bC$ can always be
brought into the form $\bZ + \tau \bZ$, $\Re\tau>0$, after multiplying
by a nonzero complex number to send one of the basis vectors to $1$.
In higher dimensions, since the space of $J$'s can be identified
with $\GL(2n,\bR)/\GL(n, \bC)$, which has real dimension $2n^2$
or complex dimension $n^2$, whereas the Siegel upper half-space of
symmetric matrices with positive-definite imaginary part can be identified
with $\Sp(2n, \bR)/\U(n)$, which has real dimension $n^2+n$ and complex 
dimension $\frac12(n^2 + n)$ (strictly smaller than $n^2$ for $n > 1$),
\emph{not every complex torus is an abelian variety}.

There is an equivalent version of Theorem \ref{thm:Riemann} which is
sometimes more convenient:
\begin{theorem}[{Riemann Period Relations, alternate version
      \cite[Ch.\ I, \S3]{MR2514037},
      \cite[Theorem 2.8]{milneAV}, and \cite[Theorem 3.1]{MR3525914}}]
\label{thm:Riemann1}
Let $X=\bC^n/\Lambda$ be a complex torus.  Then $X$ is a complex
abelian variety if and only if it admits a Riemann form, i.e., there
is a nondegenerate skew-symmetric bilinear form
$E\co \Lambda\times \Lambda\to \bZ$ whose extension $E_\bR$
to a skew-symmetric real bilinear form on $\bC^n$
satisfies $E_\bR(iv, iw) = E_\bR(v, w)$ and such that
the associated hermitian form
$H(v,w) = E_\bR(iv,w) + iE_\bR(v,w)$ is positive definite.
\end{theorem}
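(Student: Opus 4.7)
My plan is to deduce Theorem~\ref{thm:Riemann1} from Theorem~\ref{thm:Riemann} by translating directly between the normalized period matrix and a Riemann form. The content is essentially a linear-algebraic dictionary once the period characterization of abelian varieties is granted.

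For the direction ``Theorem~\ref{thm:Riemann} implies existence of a Riemann form'', I start from a period matrix $Q = (\Omega, I_n)$ with $\Omega$ symmetric and $\Im\Omega > 0$, and construct $E$ explicitly. I would declare $E$ to have Gram matrix $\begin{pmatrix}0 & I_n \\ -I_n & 0\end{pmatrix}$ in the $\bZ$-basis of $\Lambda$ given by the columns of $Q$, and extend $\bR$-linearly to $E_\bR$ on $\bC^n \cong \bR^{2n}$. Translating back into the standard complex basis, a direct matrix computation shows that $J$-invariance of $E_\bR$ is equivalent to the symmetry $\Omega = \Omega^T$ and that positive-definiteness of the associated Hermitian form $H$ is equivalent to $\Im\Omega > 0$; both hold by hypothesis.

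For the converse, I would apply the elementary divisor (Frobenius) theorem to a given Riemann form $E$, obtaining a $\bZ$-basis of $\Lambda$ in which $E$ has Gram matrix $\begin{pmatrix}0 & D \\ -D & 0\end{pmatrix}$ for a positive diagonal integer matrix $D = \diag(d_1, \ldots, d_n)$. Writing $Q = (\Pi_1, \Pi_2)$ in this basis, the first Riemann bilinear relation reads $\Pi_1 D^{-1}\Pi_2^T = \Pi_2 D^{-1}\Pi_1^T$, while positivity of $H$ forces $\Pi_2$ to be invertible. Left-multiplying by $\Pi_2^{-1}$ and rescaling the complex coordinates by the entries of $D$ then brings $Q$ into the normalized form $(\Omega, I_n)$ with $\Omega$ symmetric and $\Im\Omega > 0$, recovering the Riemann condition of Theorem~\ref{thm:Riemann}.

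The principal obstacle I expect is careful handling of the polarization type $D$ in the converse direction. For $D \neq I_n$ the direct normalization $\Pi_2^{-1}Q$ produces the weaker relation $D\Omega D^{-1} = \Omega^T$ rather than outright symmetry, and one must rescale coordinates (equivalently, pass to an isogenous rational lattice) to reach the form required by Theorem~\ref{thm:Riemann}. Once this bookkeeping is in place, the symmetry and positivity of $\Omega$ follow from the two Riemann bilinear relations respectively, closing the equivalence.
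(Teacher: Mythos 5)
The paper does not actually prove this statement---it is quoted as a classical theorem with citations to Birkenhake--Lange, Milne, and \cite{MR3525914}---so there is no in-paper argument to compare against. Your proposal is the standard derivation of the Riemann-form characterization from the period-matrix characterization (essentially the proof in the cited sources), and it is sound: the dictionary between the normalized matrix $(\Omega,I_n)$ and the symplectic Gram matrix, the Frobenius normal form $\bigl(\begin{smallmatrix}0 & D\\ -D & 0\end{smallmatrix}\bigr)$, the deduction that positivity of $H$ forces $\Pi_2$ to be invertible, and the observation that the first bilinear relation makes $\Omega D^{-1}$ (rather than $\Omega$) symmetric are all correct, and you rightly flag the $D$-bookkeeping as the delicate point.

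Two things should be made explicit to close the argument. First, the rescaling by $D$ in the converse is a change of basis of $\Lambda\otimes\bQ$, not of $\Lambda$, so you land on an isogenous torus; you then need the (standard but nontrivial) fact that being an abelian variety is an isogeny invariant. This is also the reading of ``suitable change of basis'' under which Theorem \ref{thm:Riemann} is literally correct, since a normalized period matrix $(\Omega,I_n)$ with $\Omega$ in the Siegel upper half-space describes a \emph{principally polarized} abelian variety, and for $n\ge 2$ not every abelian variety admits a principal polarization on the nose. Second, in the forward direction the form you construct is integral on the normalized lattice; if the normalization of Theorem \ref{thm:Riemann} involved an isogeny, it is a priori only $\bQ$-valued on the original $\Lambda$, which is repaired by clearing denominators (this affects neither nondegeneracy, skew-symmetry, the compatibility $E_\bR(iv,iw)=E_\bR(v,w)$, nor positivity of $H$). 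With these two remarks added, the equivalence is complete.
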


We will be interested in determining the extent to which 
something similar to Theorem \ref{thm:Riemann} or Theorem \ref{thm:Riemann1}
holds in the noncommutative world.  The difficulty, of course, is that there
is no accepted definition of a noncommutative abelian variety, or clarity
about whether various possible definitions are equivalent or not.
This problem was studied
already in \cite{MR1827023,MR2329879,KimKim,MR2525268}, using different
methods and slightly different definitions, but none of the approaches
has yet completely solved the problem.

Given a complex structure $J$ on $A_\Th$, one has a Hodge splitting of
$\bigwedge \fg^*_\bC$ as a direct sum of pieces
\[
{\textstyle\bigwedge}^{p,q} \fg^*_\bC = {\textstyle\bigwedge}^p \left(\fg^*_\bC\right)^{\text{hol}}
\otimes_\bC {\textstyle\bigwedge}^q \left(\fg^*_\bC\right)^{\text{antihol}}
\]
and we define
$\Omega^{p,q}(\cA_\Th) = \cA_\Th \otimes \bigwedge^{p,q} \fg^*_\bC$.
The \emph{Dolbeault complex} of $\cA_\Th$ is
\[
\cA_\Th = \Omega^{0,0}(\cA_\Th) \xrightarrow{\bar\partial}
\Omega^{0,1}(\cA_\Th) \xrightarrow{\bar\partial} \cdots
\xrightarrow{\bar\partial} \Omega^{0,n}(\cA_\Th),
\]
where the $\bar\partial$ operator is defined by 
\[
\bar\partial(a) = \sum_j \bar\partial_j(a)\otimes d\bar z_j,
\]
where $\{\bar\partial_j\}_{j=1,\cdots, n}$ is a 
basis for $\fg^{\text{antihol}}$ and 
$\{d\bar z_j\}_{j=1,\cdots, n}$ is the dual basis of
$\left(\fg^{\text{antihol}}\right)^*$.  We extend this as usual
to a map
$\Omega^{p,q}(\cA_\Th)\to \Omega^{p,q+1}(\cA_\Th)$.  Note that
this is a special case of the exterior differential calculus
defined by Connes in \cite{MR572645}, and we will use much
of Connes' theory in what follows.

Now we repeat a definition from \cite{MR1977884}:
\begin{definition}
\label{def:holobundle}
A \emph{vector bundle} $E$ over $\cA_\Th$ will mean a finitely generated
projective (right) module.  (This is consistent with the usual
notion of vector bundle because of the Serre-Swan Theorem.)
A \emph{holomorphic vector bundle} $E$ will mean such a bundle
equipped with a \emph{holomorphic connection} $\overline\nabla$,
meaning a map $E\to E \otimes \left(\fg^{\text{antihol}}\right)^*$
satisfying the Leibniz rule
\begin{equation}
\label{eq:Leibniz}
\overline\nabla_{\bar\partial_j}(e\cdot a) =
\overline\nabla_{\bar\partial_j}(e)\cdot a +
e \cdot \bar\partial_j(a).
\end{equation}
Note that any vector bundle can be equipped with a
holomorphic connection simply by writing
$E = p (\cA_\Th)^r$ for some projection $p$, and then
defining $\overline\nabla = p(\bar\partial)^r$.
If $\overline\nabla$ and $\overline\nabla'$
are two holomorphic connections,
then when one subtracts one from the other,
the second term on the right in \eqref{eq:Leibniz}
cancels, and thus the space of holomorphic connections
for a fixed projective module $E$ is a principal
homogeneous space for 
$\End_{A_\Th}(E)\otimes \left(\fg^{\text{antihol}}\right)^*$. 
Note that $\End_{A_\Th}(E)$ is a Morita equivalent
{\nc} torus, which we can think of as acting on the
\emph{left}.

But most of the time we will be interested in
\emph{flat holomorphic connections}, that is, ones
satisfying the flatness condition that one gets a
\emph{Dolbeault complex}
\begin{equation}
\label{eq:Dolbeault}  
E = \Omega^{0,0}(E) \xrightarrow{\overline\nabla}
\Omega^{0,1}(E) \xrightarrow{\overline\nabla} \cdots
\xrightarrow{\overline\nabla} \Omega^{0,n}(E),
\end{equation}
i.e., such that $(\overline\nabla)^2=0$.  The cohomology
groups of \eqref{eq:Dolbeault} will be denoted
$H^j(E)$.  (We suppress mention of the choice of flat holomorphic
connection, as this usually will be understood.)
For example, when $E$ is a rank-one free module with the
connection given by $\bar\partial$ itself,
$H^j(E)\cong \bigwedge^j\bC^n$ has dimension $\binom n j$,
and $H^0(E)$ consists precisely of the constants $\bC$.

Incidentally, the flatness condition $(\overline\nabla)^2=0$
is a new and nontrivial condition when $n>1$.  As a result, in
higher dimensions, it is not always immediately clear if a given
projective module admits a \emph{flat} holomorphic connection or not.
In \cite{MR941652}, Rieffel shows that if $\Theta$ is irrational,
i.e., contains at least one irrational entry, then every
projective module is ``standard'' and admits a standard connection
with constant curvature, i.e., the curvature is just
given by a(n explicit)
linear functional on $\bigwedge^2\fg$.
If this linear functional is of Hodge type $(1,1)$,\footnote{This
  condition should be familiar from K\"ahler geometry.}
so it has no $(0,2)$-component, then the antiholomorphic
part of the standard connection is a flat holomorphic connection.
Explicit calculations in \cite{KimKim,MR2525268} show
that this may or may not be the case.

We need the flatness condition to define cohomology, since
a {\nc} torus does not have ``points''\footnote{In fact, for
  generic $\Th$, $A_\Th$ is a simple algebra.}
and thus we cannot
use sheaf cohomology.  However, $H^0(E)=\ker\overline\nabla$,
the space of
``holomorphic sections,'' is defined for any holomorphic
bundle, whether or not the connection is flat.
\end{definition}
\begin{definition}
\label{def:index}
Let $(E, \overline \nabla)$ be a holomorphic bundle in the
sense of Definition \ref{def:holobundle}.  Its \emph{index}
or \emph{Euler characteristic}
$\chi(E)$ is defined to be the index of the
operator
\[
\overline\nabla + (\overline\nabla)^*\co
\Omega^{0,\text{even}}(E) \to \Omega^{0,\text{odd}}(E),
\]
which is an elliptic operator in the sense of
\cite{MR572645}.  Note that this is well-defined whether
or not $\overline\nabla$ is flat.  The adjoint
$(\overline\nabla)^*$ here is defined using an appropriate
Hilbert space completion of $\Omega^{\bullet,\bullet}(E)$,
which will be explained in detail in the proofs of the main theorems
in Section \ref{sec:mainproof}.  The Hilbert space inner product is
not unique, but the non-uniqueness won't affect the value of the index.
\end{definition}  

The following main theorems are in some sense ``known'' but we have
not found an explicit reference for them, except in complex dimension $1$,
where they appear as \cite[Proposition 2.5]{MR1977884} and
\cite[Theorem 2.8]{MR2054986}.
\begin{theorem}[Riemann-Roch Theorem]
\label{thm:RR}
Let $J$ be a holomorphic structure on $A_\Th$, and let
$E$ be a holomorphic vector
bundle over $(\cA_\Th, J)$.  Then
\[
\chi(E) = \Ch_{\text{\textup{top}}}([E]).
\]
\end{theorem}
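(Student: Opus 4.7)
The plan is to view $D_E := \overline\nabla + \overline\nabla^*$ as an elliptic operator on a Hilbert space of \emph{scalar} $L^2$-sections (built via the trace), show it is Fredholm, and compute its index by a McKean--Singer/heat-kernel argument, exploiting the fact that the ``tangent bundle'' of $\cA_\Th$ is flat so the Todd-class contribution is trivial.

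First I would set up analysis. Choose a Hermitian metric on $E$ (e.g., write $E = p\,\cA_\Th^r$ and restrict the standard inner product on $\cA_\Th^r$), and combine it with the trace $\Tr$ on $\cA_\Th$ and the Hermitian inner product on $\bigwedge^{0,q}\fg_\bC^*$ determined by $J$, to obtain an inner product on each $\Omega^{0,q}(E)$.  Completing gives Hilbert spaces $\mathcal H^{0,q}$ and a $\bZ_2$-graded Hilbert space $\mathcal H^{0,\bullet} = \mathcal H^{0,\text{even}}\oplus \mathcal H^{0,\text{odd}}$ on which $D_E$ is densely defined.  Using Connes' pseudodifferential calculus \cite{MR572645}, I would check that $D_E$ is essentially self-adjoint, $\Delta_E := D_E^2$ has compact resolvent, and hence $\overline\nabla + \overline\nabla^* \co \mathcal H^{0,\text{even}}\to \mathcal H^{0,\text{odd}}$ is Fredholm with integer index equal to $\chi(E)$ in the sense of Definition \ref{def:index}.

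Next I would invoke homotopy invariance to reduce to a model connection.  The Fredholm index is unchanged when $\overline\nabla$ is deformed through arbitrary holomorphic connections (not necessarily flat), since the space of holomorphic connections is affine, and it is unchanged by rescaling the Hermitian metric.  Hence I may replace $\overline\nabla$ by $p\circ\bar\partial^{(r)}$ on $E = p\,\cA_\Th^r$, and compute $\chi(E)$ via McKean--Singer: $\chi(E) = \Tr_s\bigl(e^{-t\Delta_E}\bigr)$ for any $t>0$.  Letting $t\to 0^+$, the short-time heat expansion (as in Connes' noncommutative index theorem) gives the index as a local expression in the symbol of $D_E$ and the curvature of $\overline\nabla$.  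Crucially, the tangent Lie algebra $\fg$ is abelian and the Levi-Civita-type connection is flat, so all curvature corrections coming from the ``base'' vanish: the Todd-class factor equals $1$.  What remains is the pairing of $[E]\in K_0(\cA_\Th)$ with the fundamental cyclic cocycle determined by $\Tr$ and a top-degree element of $\bigwedge\fg_\bC^*$ built from $J$, which is by definition $\Ch_{\text{top}}([E])$.

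The main obstacle will be step three: justifying that the heat-kernel trace stabilizes, and showing rigorously that only the top-degree component of the Chern--Connes character of $p$ survives in the limit, with no Todd-type corrections.  Concretely, I would expand $e^{-t\Delta_E}$ using Fourier decomposition with respect to the gauge action of $\bT^{2n}$ (which simultaneously diagonalizes $\Delta_E$ modulo finite-dimensional corrections supported on the ``zero Fourier mode''), and identify the $t$-independent supertrace contribution as the pairing of $[E]$ with the alternating product of derivations in $\fg^{\text{antihol}}$ and its dual, i.e.\ with the cyclic $2n$-cocycle dual to $\bigwedge^{n,n}\fg_\bC^*$.  This pairing is precisely the topological Chern character of $[E]$ read off from Rieffel's/Elliott's description of $K_0(\cA_\Th)$, establishing $\chi(E)=\Ch_{\text{top}}([E])$.
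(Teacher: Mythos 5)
Your overall strategy differs from the paper's in the decisive step. The paper, after establishing (as you do) that $D_E$ is Fredholm and that the index is an affine-homotopy invariant of the connection and a continuous invariant of $J$, never touches the heat kernel: it observes that $\chi$ descends to a homomorphism $K_0(A_\Th)\to\bZ$, uses Elliott's $\Theta$-independent parametrization of $K_0$ to deform along $t\mapsto t\Th$ through a continuous family of Fredholm operators, and then evaluates $\chi$ at $\Th=0$ by quoting the classical Hirzebruch--Riemann--Roch theorem on $\bT^{2n}$ (where $\mathrm{Td}=1$). You instead propose to stay at fixed $\Th$ and prove a genuinely noncommutative local index theorem via McKean--Singer and short-time asymptotics. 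That route is viable in principle (heat-kernel proofs of the index theorem on {\nc} tori exist in the literature), and if completed it would be more intrinsic --- in particular it would sidestep the paper's caveat that positive $K_0$-classes need not be represented by honest bundles after degenerating to $\Th=0$. But it is substantially harder, and as written your third step contains the entire difficulty of the theorem.

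Two concrete problems. First, your proposed mechanism for evaluating $\lim_{t\to0^+}\Tr_s\bigl(e^{-t\Delta_E}\bigr)$ --- Fourier decomposition with respect to the gauge action of $\bT^{2n}$ --- fails for non-free $E$: the gauge action does not preserve a general projective module $E=p\,\cA_\Th^r$ (it does not fix $p$), and for Rieffel's standard Heisenberg modules the Laplacian of the constant-curvature connection is a harmonic oscillator on $\cS(\bR^p\times\bZ^q\times F)$, whose eigenfunctions are nothing like gauge Fourier modes. Second, the assertion that the short-time expansion yields exactly the pairing of $[E]$ with the top-degree cyclic cocycle, with the Todd factor equal to $1$ and all lower-degree Chern--Connes components cancelling, is precisely the statement to be proved; invoking it without either a Getzler-type rescaling argument adapted to $\cA_\Th$ or an explicit computation on the standard modules is circular. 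You would also need to justify separately that the trace-and-derivations cyclic $2n$-cocycle pairing computes $\Ch_{\text{top}}$ as defined through Elliott's isomorphism $K_0(A_\Th)\cong\bigwedge^\bullet\bZ^{2n}$; that is true and known, but it is an input, not a tautology. If you want to keep your framework, the cleanest repair is to replace your step three by the paper's deformation $t\mapsto t\Th$ and the commutative Hirzebruch--Riemann--Roch evaluation, for which your steps one and two (which essentially coincide with the paper's) already supply the needed Fredholmness and homotopy invariance.
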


Here the right-hand side of the formula is defined as follows.
The vector bundle $E$ defines a class 
$[E]\in K_0(\cA_\Th)\cong K_0(A_\Th)$
{\lp}the equality of the two $K$-groups 
follows immediately from 
\cite[Lemme 1]{MR572645}{\rp}.  This group is free abelian;
in fact $K_\bullet(A_\Th)$ is sent isomorphically
under the Chern character to the exterior algebra
$\bigwedge^\bullet  H^1(\bT^{2n})=\bigwedge^\bullet \bZ^{2n}$.
{\lp}See \cite{MR731772} and \cite{MR3742588}.  The denominators
in the Chern character cancel out for basically the same reasons
as in \cite[Proposition 4.3]{hatcherVB}.{\rp}
The notation $\Ch_{\text{top}}([E])$ means the component
of the Chern character in the top-degree summand
$\bigwedge^{2n}\bZ^{2n}\cong \bZ$.  In the case $n=1$
studied by Connes, Polishchuk, and Schwarz, this reduces
to what would usually be called the Chern class
$c_1([E])$, or what Polishchuk and Schwarz call the degree.
\begin{theorem}[Hodge Theorem]
\label{thm:Hodge}
If, in the situation of \textup{Theorem \ref{thm:RR}},
the holomorphic connection on $E$ is flat, so that the cohomology
groups $H^j(E)$ are defined, then these groups are finite-dimensional,
and $\chi(E)=\sum_{j=0}^n (-1)^j\dim H^j(E)$.
\end{theorem}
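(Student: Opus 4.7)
The plan is to run the classical Hodge-theoretic argument in Connes' framework of elliptic operators on $\cA_\Th$. The key input is that the Dolbeault operator $\overline\nabla$, together with its adjoint $\overline\nabla^*$ with respect to a suitable Hilbert space inner product, yields an elliptic Laplacian in each bidegree, and flatness of $\overline\nabla$ turns the index formula of Definition \ref{def:index} into an alternating sum of harmonic dimensions.

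First I would fix the Hilbert space setup. Complete $\cA_\Th$ to the GNS Hilbert space $L^2(\cA_\Th)$ with respect to the canonical trace $\Tr$; write $E = p(\cA_\Th)^r$ and give $E$ the induced $\cA_\Th$-valued inner product $\langle e, f\rangle_{\cA_\Th} = \sum_k \overline{e_k}f_k$ coming from the standard one on $(\cA_\Th)^r$. Pick any positive definite Hermitian form on $\bigwedge^{0,j}\fg^*_\bC$; tensoring and applying $\Tr$ yields a Hilbert space completion $\cH^{0,j}(E)$ of $\Omega^{0,j}(E)$, on which $\overline\nabla$ acts as an unbounded operator with formal adjoint $\overline\nabla^*$ expressible (up to zero-th order terms) as $-\sum_j \partial_j$ followed by contraction with $d\bar z_j$. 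Because the $\partial_j$ and $\bar\partial_j$ generate the gauge action, Connes' pseudodifferential calculus \cite{MR572645} applies verbatim, and $D = \overline\nabla + \overline\nabla^*$ is elliptic in his sense.

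Next I would assemble the Hodge decomposition. Flatness $(\overline\nabla)^2=0$ gives, upon squaring, the diagonal Laplacian
\[
\Delta_j \;=\; \overline\nabla^*\overline\nabla + \overline\nabla\,\overline\nabla^* \co \Omega^{0,j}(E)\longrightarrow \Omega^{0,j}(E),
\]
which is a nonnegative, formally self-adjoint, second-order elliptic operator. Ellipticity in Connes' calculus yields a parametrix modulo smoothing operators that are trace-class on the GNS completion, so $\Delta_j$ has finite-dimensional kernel $\cH^j$ and closed range; a standard argument (construct a Green's operator $G_j$ with $\Delta_j G_j = G_j\Delta_j = \id - P_{\cH^j}$) then gives the orthogonal decomposition
\[
\cH^{0,j}(E) \;=\; \cH^j \;\oplus\; \overline{\im \overline\nabla} \;\oplus\; \overline{\im \overline\nabla^*},
\]
which restricts to a topological decomposition on the smooth level $\Omega^{0,j}(E)$. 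From this one reads off the usual isomorphism $H^j(E)\cong \cH^j$, in particular establishing that each $H^j(E)$ is finite-dimensional.

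Finally I would match the two Euler characteristics. By Definition \ref{def:index}, $\chi(E)$ is the index of $D^+ = \overline\nabla + \overline\nabla^*\co \Omega^{0,\mathrm{even}}(E)\to\Omega^{0,\mathrm{odd}}(E)$; its kernel and cokernel are identified with $\bigoplus_{j\text{ even}}\cH^j$ and $\bigoplus_{j\text{ odd}}\cH^j$ respectively, using self-adjointness of $\Delta$ and the Hodge decomposition. Hence
\[
\chi(E) \;=\; \sum_{j=0}^{n}(-1)^j \dim \cH^j \;=\; \sum_{j=0}^{n}(-1)^j \dim H^j(E),
\]
as required. The main obstacle I anticipate is the justification of the full Hodge decomposition in the noncommutative setting: one must verify carefully that Connes' elliptic regularity is strong enough to produce a Green's operator with smoothing remainder on the \emph{module} $E$ (not merely on $\cA_\Th$ itself), and that the resulting orthogonal complements really are closed in $\cH^{0,j}(E)$. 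Once this functional-analytic core is in place, the remaining algebra is formal.
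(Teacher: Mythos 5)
Your proposal is correct in outline, but it takes a noticeably different route from the paper's. You run the classical Hodge argument head-on: form the Laplacians $\Delta_j=\overline\nabla^*\overline\nabla+\overline\nabla\,\overline\nabla^*$ (correctly noting that flatness is what makes $D^2$ degree-preserving), build a parametrix and Green's operator in Connes' pseudodifferential calculus, and extract the Hodge decomposition and the identification $H^j(E)\cong\cH^j$ by hand. The paper instead outsources essentially all of this functional analysis to the abstract theory of Hilbert complexes of Br\"uning--Lesch \cite{MR1174159}: one only has to check that the closed extensions $d^j$ of $\overline\nabla$ form a Hilbert complex on the GNS completions and that $D=\overline\nabla+\overline\nabla^*$ is essentially self-adjoint and Fredholm, after which Fredholmness of the complex, finite-dimensionality of cohomology, agreement of $L^2$- and smooth cohomology, and the index formula all come for free from their Theorems 2.12 and 3.5. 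The paper also handles ellipticity on the projective module differently: rather than compressing the calculus by the projection $p$ (which is exactly the point you flag as your ``main obstacle''), it adds a complementary bundle so that $E\oplus F$ is free, identifies the free module with $\cS(\bT^{2n})^r$ as a Fr\'echet space, and observes that the resulting operator is an honest elliptic first-order operator on the commutative torus perturbed by a zero-order term --- so no noncommutative parametrix construction is ever needed. Your approach is more self-contained and closer to the textbook proof, at the cost of having to verify that Connes' calculus, Sobolev scale, and Rellich-type compactness (which do hold on $L^2(\Tr)\cong\ell^2(\bZ^{2n})$ via the gauge action) extend to the compressed operator on $E$; the paper's approach is shorter but leans on two external black boxes. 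Neither gap is fatal, but if you keep your route you should spell out why the smoothing remainder in the parametrix is compact on the GNS space and why elliptic regularity lets the orthogonal decomposition descend to $\Omega^{0,j}(E)$.
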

\begin{remark}
\label{rem:rigidity}
We would like to point one that Theorem \ref{thm:RR} contains within
it an extraordinary rigidity result.  Namely, the Euler 
characteristic $\chi(E)$
is \textbf{independent of the complex structure $J$ and of the
holomorphic structure on $E$, as well as of the noncommutativity
parameter $\Theta$} (within a family of holomorphic bundles
for different {\nc} tori with varying $\Theta$).

However, the actual cohomology groups in Theorem \ref{thm:Hodge}
do indeed depend on this data, as it is easy to see from
\cite{MR2054986} that there are cases (with complex dimension $n=1$)
where one gets cancellation in this formula, whereas by
\cite[Proposition 2.5]{MR1977884}, for ``standard'' holomorphic bundles
with nonzero $c_1$ (i.e., nonzero degree), this does not happen,
and only one of the two cohomology groups is nonzero.  The paper
\cite{MR2247860} studies jumps in the cohomology groups
$H^j(E)$ (for $E$ a rank-one free module) in more detail
in the case $n=1$.

The rigidity of the index shows that {\nc} tori are indeed very special.
Within the class of compact complex manifolds, one can find examples
of diffeomorphic manifolds (with different complex structures) and 
\emph{different} values of the Todd genus, which is the Euler characteristic
(or index)
of the structure sheaf $\cO$.  This was first observed by Hirzebruch
for almost-complex manifolds diffeomorphic to $\bC\bP^3$
\cite[\S2.1]{MR0066013} and improved by Kotschick
\cite{MR2855094}, who gave examples that are actually 
diffeomorphic (but necessarily non-birational) complex
projective varieties.  Of course this cannot happen with complex tori
since they always have vanishing Chern classes.
\end{remark}

The next section, Section \ref{sec:mainproof},
contains the proofs of the main theorems,
Theorem \ref{thm:RR} and Theorem \ref{thm:Hodge}.  The last section,
Section \ref{sec:cases}, contains discussion of specific cases,
as well as Theorems \ref{thm:splittingcase}, \ref{thm:nonalg},
and \ref{thm:NCRiemann}, which are {\nc} replacements for
the Riemann conditions, Theorems \ref{thm:Riemann} and
\ref{thm:Riemann1}. 

\section{Proofs of the Riemann-Roch and Hodge Theorems}
\label{sec:mainproof}

In this section we give the proofs of the two main theorems,
Theorem \ref{thm:RR} and Theorem \ref{thm:Hodge}.

\begin{proof}[Proof of Theorem \ref{thm:RR}]
We begin with the case where $E=(\cA_\Th)^r$ is a free module over $\cA_\Th$,
in which case (as a Fr\'echet space) it can be identified
with $\cS(\bT^{2n})^r$.  If we take $\overline\nabla$ to be
just $(\bar\partial)^r$, then (forgetting the {\nc} structure)
$\overline\nabla$ gives rise to the usual Dolbeault complex
on the Schwartz space for a translation-invariant 
complex structure $J$ and flat holomorphic connection on $\bT^{2n}$.
Choose a $J$-invariant Euclidean structure on $\bR^{2n}$;
this will give rise to a translation-invariant K\"ahler metric
on $(\bT^{2n},J)$.
Thus the cohomology groups are $H^j(E)\cong \bigl(\bigwedge^j\bC^n\bigr)^r$
(really one should use the conjugate space
$\bigl(\bigwedge^j\overline\bC^n\bigr)^r$,
for the reasons explained in \cite[\S1]{MR2514037})
and the operator $D=\overline\nabla+ \overline\nabla^*$ from
even to odd forms is Fredholm, and 
$T = \begin{pmatrix} 0 & D\\ D^* & 0\end{pmatrix}$ is essentially
self-adjoint (on $L^2(\bT^{2n})^r$ for our choice of Riemannian metric).
By the usual Hodge Theorem, the index $\chi(E)=0$.

If we keep $E$ a free module but change the holomorphic 
connection to $\overline\nabla'$, the new connection differs from
the old one by a multiplication operator of order $0$, while $D$ is an
elliptic differential
operator of order $1$.  So the new operator
$T' = \begin{pmatrix} 0 & D'\\ D'^* & 0\end{pmatrix}$
differs from $T$ by a lower-order perturbation, and the index remains 
unchanged.

Next, suppose we are given a holomorphic connection 
$\overline\nabla_E$ on $E$, as well as a complementary bundle
$F$ with a holomorphic connection $\overline\nabla_F$, so that the
direct sum $E\oplus F$ is free.  Then 
$\overline\nabla = \overline\nabla_E \oplus \overline\nabla_F$ is a 
holomorphic connection for a free module, so by the cases we've already
considered, $\ker\bigl( \overline\nabla + \overline\nabla^* \bigr)$ is
finite-dimensional and the index of the operator from even to odd forms
is $0$.  It follows that $D=\overline\nabla_E+ \overline\nabla_E^*$ from
even to odd forms is Fredholm, and 
$T = \begin{pmatrix} 0 & D\\ D^* & 0\end{pmatrix}$ is essentially
self-adjoint on the Hilbert space completion of $E$.  In particular,
the index $\chi(E)$ is well-defined, and independent of the choice of 
connection (by the independence result for free modules).
We also obtain that $\chi(E)+\chi(F)=0$.

In fact, we also obtain that $\chi(E)$ is independent of the 
complex structure $J$, since as we vary
the $J$ continuously, we get a continuous family of
Fredholm operators $T_J$, and thus the index cannot change.

With all of these facts at our disposal, we see that
$\chi$ is a well-defined homomorphism
$K_0(A_\Th)\to \bZ$.  To prove the index theorem, it
suffices to compute this map on a set of generators for $K_0$.

As noted in \cite[Proof of Proposition 2.6]{MR2054986},
the theorem can now be reduced to the commutative case, i.e.,
to the Hirzebruch Riemann-Roch Theorem for complex manifolds.
The reason is that Elliott \cite{MR731772} gives us a canonical isomorphism
of $K_0(A_\Th)$ with the free abelian group $K^0(\bT^{2n})$, 
that varies continuously
(and thus remains constant) as $\Theta$ changes. First we compute the
homomorphism $\chi$ in the commutative case. Choose
a finite collection of vector bundles $E$ on $\bT^{2n}$ generating
$K^0$.  For each choice of $E$, we have (since the Todd
polynomial of any complex torus is just the constant $1$)
by Hirzebruch Riemann-Roch
\[
\chi(E) = \int_{\bT^{2n}} \Ch(E) \cdot\text{Td}(\bT^{2n})
= \Ch_{\text{\textup{top}}}([E]).
\]
So the theorem holds in the commutative case.

Now given the bundle $E$ over $\bT^{2n}$, the homotopy
$t\mapsto t\Th$, $t\in [0, 1]$ gives us a 
homotopy $E_t$ of vector bundles (so that $E_t$ is a 
vector bundle for $A_{t\Th}$ and the $K_0$ class of $E_t$
doesn't change under the Elliott parametrization of $K_0$).
In this way we again get a continuous family $T_t$ of Fredholm operators
(on the same Hilbert space $L^2(E)$, the Hilbert space of $L^2$ sections
of $E$ with respect to Lebesgue measure on the base
and some hermitian metric on the bundle), and so the index doesn't change.
Thus the theorem holds in general.
\end{proof}

\begin{remark}
There is one little subtlety which we should note here, which 
is that in the commutative case, not every positive
$K_0$-class is represented by a vector bundle.  (It may
only be represented by a formal difference of vector
bundles.  This was noted in \cite[p.\ 320]{MR941652}.) 
So if one starts with a holomorphic
bundle for a {\nc} complex torus, after perturbing
$\Theta$ to $0$ it can happen that the bundle
degenerates (i.e., no longer gives an actual bundle).
This is not such a problem since one can take care
to use $K_0$ generators that correspond to actual
vector bundles even in the commutative case.
(It suffices to jack up the rank by adding a free
module in order to get into the stable range.)
\end{remark}

\begin{proof}[Proof of Theorem \ref{thm:Hodge}]
Suppose $(A_\Th, J)$ is a {\nc} complex torus, and let
$\overline\nabla$ be a flat holomorphic connection on a smooth vector
bundle $E$.  We basically follow the method of proof of the classical
Hodge Theorem, using the machinery developed in
\cite[Proposition 6.5]{MR0212836} and \cite{MR1174159}.
The main technical issue is that $\overline\nabla$ is \emph{a priori}
defined on $E$, which inherits a Fr\'echet space structure
from $\cA_\Th$, but we will want to use analysis of
operators on Hilbert spaces, so we need Hilbert space
completions for the spaces $\Omega^{0,j}(E)$.  For this
we use $L^2(\Tr)$, the Hilbert space completion of
$A_\Th$ in the GNS representation
of the tracial state $\Tr$ (this will usually be a $\mathrm{II}_1$ factor
representation).  In the commutative case $\Th=0$,
$L^2(\Tr)$ simply becomes $L^2(\bT^{2n})$ with respect to
normalized Lebesgue measure, so this is something familiar.
Since $E$ is a direct summand in $(\cA_\Th)^r$ for some $r$,
we get a Hilbert space completion $\cH$ of $E$, namely the
closure of $E$ in $L^2(\Tr)^r$, and
Hilbert spaces $\cH_j$ completing $\Omega^{0,j}(E)$,
with $\cH_0=\cH$.

Next, we observe that 
$\overline\nabla\co \Omega^{0,j}(E)\to \Omega^{0,j+1}(E)$
is closable;  this follows from the case of a free module with
an arbitrary holomorphic connection,  since $\bar\partial$ 
looks like a constant-coefficient first-order differential
operator acting on the Schwartz space $\cS(\bT^{2n})$, 
and on a free module, $\overline\nabla$
differs from $\bar\partial^r$ by a zero-order operator. We let 
$d^j\co \cH_j\to \cH_{j+1}$ denote the closed extension of
$\overline\nabla$.  Then
\[
\cH_0 \xrightarrow{d^0} \cH_1 \xrightarrow{d^1} \cdots
\xrightarrow{d^n}  \cH^n
\]
(note that the operators are closed but only densely defined)
is a Hilbert complex in the sense of \cite{MR1174159}.

The operator $D=\overline\nabla + \overline\nabla^*$
(sending even forms to odd forms and \emph{vice versa})
is elliptic, in the sense that if we combine it with the corresponding
operator coming from a holomorphic connection (not necessarily
flat) on a complementary vector bundle, we get an
elliptic (vector-valued) first-order operator on $\cS(\bT^{2n})$.
Thus it's essentially self-adjoint on $\bigoplus\cH_j$.
So all the theory of \cite[\S\S2--3]{MR1174159}
immediately applies (see in particular
Theorem 2.12 and Theorem 3.5), the Hilbert complex is Fredholm
(with the same cohomology as the smooth complex), 
and the Hodge Theorem follows.
\end{proof}

\section{Examination of Some Specific Cases}
\label{sec:cases}

\subsection{The Product Case}
\label{sec:product}

The simplest kinds of complex higher-dimensional {\nc}  tori
are what one can call ``products of {\nc} elliptic curves.'' These are the {\nc}
analogues of products of elliptic curves, which are very special even
within the class of abelian varieties, let alone within all complex tori.
For example, when $n=2$, products of two elliptic curves vary within a moduli
space of complex dimension $2$, while abelian varieties of complex dimension
$2$ vary within a moduli space of complex dimension $3$, and complex tori
vary in a moduli space of complex dimension $4$.

To be more precise, we make a definition.
\begin{definition}
\label{def:producttype}
A {\nc} complex torus $A_\Th$ will be said to be of product type if the
$2n\times 2n$ matrices $J$ and $\Th$ both split as block direct sums
of $2\times 2$ matrices of the same form, i.e.,
\[
\Th = \begin{pmatrix} 
\begin{matrix}0 & \theta_1\\-\theta_1 & 0 \end{matrix} & \mathbf{0}_2 & \cdots &  \mathbf{0}_2\\
\mathbf{0}_2 & \begin{matrix}0 & \theta_2\\-\theta_2 & 0\end{matrix} & \cdots &  \mathbf{0}_2\\
\vdots & \vdots & \ddots & \vdots\\
\mathbf{0}_2 & \mathbf{0}_2 &\cdots & \begin{matrix}0 & \theta_n\\-\theta_n & 0\end{matrix}
\end{pmatrix},
\]
and similarly with $J$, except that the diagonal blocks in this case can be
arbitrary $2\times 2$ real matrices $J_j$
with trace $0$ and determinant $1$, $j=1,\cdots, n$.
Note that in the product-type case,
$A_\Th = A_{\theta_1} \otimes \cdots \otimes A_{\theta_n}$,
where $A_\theta$ denotes the {\nc} $2$-torus with parameter
$\theta$, and the complex
structure $J$ also splits as $J_1\otimes \cdots \otimes J_n$.

For simplicity of notation, a complex {\nc} torus with complex dimension $1$
will be called a \emph{{\nc} elliptic curve}.
\end{definition}

In the product-type case, we can carry over the results of \cite{MR1977884}, 
\cite{MR2054986}, \cite{MR2330589}, etc., to
get a complete description of the holomorphic geometry of $(A_\Th, J)$.
As this should be regarded as the ``trivial case'' of
higher-dimensional {\nc} tori,
we do not go into exhaustive detail, but give one representative theorem.
Recall that by \cite{MR2054986}, the holomorphic vector bundles over
a {\nc} elliptic curve are completely classified.  They are formed from
iterated extensions of the standard holomorphic bundles considered in
\cite{MR1977884}.

\begin{theorem}
\label{thm:Hproduct}
Suppose $(A_\Th,J)=(A_{\theta_1},J_1)\otimes \cdots \otimes (A_{\theta_n},J_n)$
is of product type, and let $E_j$ be a holomorphic vector bundle for
the {\nc} elliptic curve $(A_{\theta_j},J_j)$.  Then
$E = E_1\otimes \cdots \otimes E_n$ is a holomorphic vector bundle for
$(A_\Th,J)$ with a flat holomorphic connection,
and its cohomology groups are given by
$H^\bullet(E) = \bigotimes_{j=1}^n H^\bullet(E_j)$.
{\lp}The gradings add, and since $H^\bullet(E_j)$ is concentrated in
degrees $0$ and $1$, $H^j(E)$ is the sum of the tensor products with
$j$ factors in degree $1$.{\rp}
\end{theorem}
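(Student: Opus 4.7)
The plan is to reduce the theorem to an instance of the K\"unneth principle: the product-type assumption makes the smooth algebra, the complex structure, and hence the Dolbeault complex of $E$ split as tensor products of one-variable pieces, for which Theorem \ref{thm:Hodge} has already been verified in \cite{MR2054986}.

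First I would construct a flat holomorphic connection on $E=E_1\otimes\cdots\otimes E_n$. In the product-type case the smooth algebra factors as $\cA_\Th = \cA_{\theta_1}\,\hat\otimes\,\cdots\,\hat\otimes\,\cA_{\theta_n}$, the antiholomorphic tangent space splits as $\fg^{\text{antihol}} = \bigoplus_l \fg_l^{\text{antihol}}$, and each $\bar\partial_l$ acts nontrivially only on its own tensor slot. Define
\[
\overline\nabla(\omega_1\otimes\cdots\otimes\omega_n) = \sum_{k=1}^n (-1)^{j_1+\cdots+j_{k-1}}\,\omega_1\otimes\cdots\otimes \overline\nabla_{E_k}\omega_k\otimes\cdots\otimes\omega_n
\]
for $\omega_l\in\Omega^{0,j_l}(E_l)$. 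The Leibniz rule \eqref{eq:Leibniz} follows from the one-variable Leibniz rules together with the fact that $\bar\partial_k$ annihilates the tensor slots $\ne k$. Because each factor $(\cA_{\theta_l},J_l)$ has complex dimension $1$, there is nothing to check for flatness within any single slot, and the cross-terms in $(\overline\nabla)^2$ with $k\ne k'$ cancel in pairs by the standard Koszul sign computation that makes the total complex of a tensor product of two-term complexes into a complex. The tensor product of projective modules is projective, so $E$ is indeed a holomorphic vector bundle in the sense of Definition \ref{def:holobundle}.

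Second, the decomposition
\[
\Omega^{0,j}(E) = \bigoplus_{\substack{j_1+\cdots+j_n=j\\j_l\in\{0,1\}}} \Omega^{0,j_1}(E_1)\,\hat\otimes\,\cdots\,\hat\otimes\,\Omega^{0,j_n}(E_n)
\]
identifies the Dolbeault complex of $E$ with the total complex of the tensor product of the one-variable Dolbeault complexes of $E_1,\dots,E_n$. To pass to cohomology I would work with harmonic representatives: choose a translation-invariant Hermitian metric on each $(\cA_{\theta_l},J_l)$ and take the tensor product metric. A direct calculation shows that the total Laplacian $\Delta = \overline\nabla\,\overline\nabla^* + \overline\nabla^*\overline\nabla$ acts on a pure tensor as the sum of the one-variable Laplacians acting in each slot. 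Since these commute pairwise and are essentially self-adjoint with finite-dimensional kernels canonically identified with $H^\bullet(E_l)$ by Theorem \ref{thm:Hodge} in complex dimension $1$, the joint functional calculus identifies $\ker\Delta$ with $H^\bullet(E_1)\otimes\cdots\otimes H^\bullet(E_n)$. Matching the total antiholomorphic degree with the grading on the right-hand side then yields the stated formula.

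The main obstacle is not analytical depth but careful bookkeeping: keeping the Koszul signs consistent, ensuring that the Hilbert-space completions used in the proof of Theorem \ref{thm:Hodge} really do give rise to a Hilbert tensor product structure compatible with the algebraic tensor product of the smooth Dolbeault complexes, and verifying that the tensor-product-of-kernels identification (easy for bounded operators) survives passage to the unbounded Laplacians. The last point is handled by the fact that each one-variable $\Delta_l$ has a spectral gap above $0$ on the orthogonal complement of its harmonic subspace, so the spectrum of the commuting sum avoids a neighborhood of $0$ except precisely on the tensor product of kernels.
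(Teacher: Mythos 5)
Your proposal is correct and follows essentially the same route as the paper: the paper's proof is a two-sentence assertion that the tensor-product connection is flat and that ``the Dolbeault complex for $E$ splits as the tensor product of the complexes for the {\nc} elliptic curve factors,'' leaving the K\"unneth step implicit. What you add beyond the paper is worthwhile: the explicit Koszul-signed connection, the observation that flatness is automatic within each one-dimensional slot so only cross-terms need checking, and---most substantively---the harmonic-representative argument with commuting one-variable Laplacians and a spectral gap, which is exactly what is needed to justify the K\"unneth identification for the \emph{completed} tensor products of Fr\'echet/Hilbert spaces, where the purely algebraic K\"unneth theorem does not apply verbatim.
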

\begin{proof}
It is clear that we get a flat holomorphic connection on $E$
by operating separately on the tensor factors.  
The cohomology calculation follows since
the Dolbeault complex for $E$ splits as the
tensor product of the complexes for the {\nc} elliptic curve factors.
\end{proof}

\subsection{The Splitting Case}
\label{sec:product2}

A case which is not quite as simple as that of product type {\nc}
complex tori, but where many of the same considerations apply,
is the case where one has a ``splitting.''  This is the {\nc}
analogue of the following classical situation.  Suppose
$\bC^n/\Lambda$ is a complex torus, and suppose there is
surjective $\bC$-linear map $f\co\bC^n\to \bC$ sending $\Lambda$ to a
lattice in $\bC$.  That means the complex torus $\bC^n/\Lambda$
has an elliptic curve $\bC/f(\Lambda)$ as a quotient, and thus splits
as an extension of this elliptic curve by the complex subtorus
$(\ker f)/\left(\ker f\vert_\Lambda\right)$.  When $n=2$, we thus have an
extension of one elliptic curve by another.  The following
is a fact about classical complex algebraic geometry.
\begin{remark}
\label{rem:splittorus}
Let $\bC^2/\Lambda$ is a complex torus which is an extension of
one elliptic curve by another.  In other words, assume there
exist $\tau,\tau'$ in the upper half-plane and complex linear
maps $\alpha\co \bC\to \bC^2$, $\beta\co \bC^2\to \bC$ such that
\[
0\to \bC \xrightarrow{\alpha} \bC^2 \xrightarrow{\beta} \bC\to 0
\quad \text{and} \quad 
0\to (\bZ+\tau'\bZ) \xrightarrow{\alpha} \Lambda
\xrightarrow{\beta} (\bZ+\tau\bZ)\to 0
\]
are short exact.  Then $\bC^2/\Lambda$ need not be an abelian variety.

Indeed, the hypothesis of the proposition implies that after changing the basis for 
$\bC^2$ by an element of $\GL(2,\bC)$, we can assume $\Lambda$
is spanned by the vectors
\[
e_1 = \begin{pmatrix} 1\\ 0 \end{pmatrix},
e_2 = \begin{pmatrix} \tau' \\ 0 \end{pmatrix},
e_3 = \begin{pmatrix} 0\\ 1 \end{pmatrix},
e_4 = \begin{pmatrix} w\\ \tau \end{pmatrix},
\]
for some $w\in \bC$.  If $w$ were $0$ then clearly we'd have the 
product of two elliptic curves and this would be an abelian variety.
In fact, an extension of abelian varieties within the category of
abelian varieties has to split up to isogeny 
\cite[Proposition 10.1]{milneAV}.  But for sufficiently generic $w$,
it is easy to see that this torus does not split; in fact, 
that $\bC^2/\Lambda$ and the dual torus $(\bC^2)^*/\Lambda^*$
are not isogenous, which implies that we do not have an abelian 
variety.\footnote{Here $(\bC^2)^*$ is the conjugate-linear dual
of $\bC^2$ and $\Lambda^*=\{f\in (\bC^2)^*: \Im f(\lambda)\in \bZ 
\ \forall \lambda\in \Lambda\}$ --- see \cite[I.2]{milneAV}.}
\end{remark}

In spite of Remark \ref{rem:splittorus}, we still get some useful information
in the {\nc} splitting situation.  Suppose $A_\Th$ is a {\nc} torus
with a holomorphic structure that ``splits,'' i.e., such that $J$ is of block
diagonal form, say $\begin{pmatrix} J_1 & 0 \\ 0 & J_2 \end{pmatrix} $,
with $J_1$ of size $2\times 2$.  Then we obtain an obvious
holomorphic injection
\begin{equation}
\label{eq:injection}
\varphi\co
(A_\theta, J_1) \to (A_\Th, J),\quad \text{where }\theta = \Theta_{12},
\end{equation}
given by $u_j\mapsto U_j$, with $u_j$, $j=1,\,2$, the canonical 
generators of $A_\theta$,
which respects smooth structures.  The holomorphic geometry
of the {\nc} elliptic curve $(A_\theta, J_1)$ is completely 
understood, thanks to \cite{MR1977884}, \cite{MR2054986},
and \cite{MR2330589}.

Let $E$ be a holomorphic vector bundle over $\cA_\theta$,
$\theta=\theta_{12}$ and $J_1$ as in \eqref{eq:injection}.
Then $\varphi_*(E)=E\otimes_\varphi \cA_\Th$ will be a holomorphic
vector bundle over $(\cA_\Th, J)$, and knowledge of the 
cohomology of $E$ should give some information about its
cohomology.

To see what one should expect, consider the classical case where $X$
is a complex torus (of complex dimension $n$)
with a (nontrivial) holomorphic group homomorphism
to an elliptic curve $C$. We have a short exact sequence of
complex tori $1\to F\to X\xrightarrow{p} C\to 1$, and if $E$ is a
holomorphic vector bundle over $C$, there is a Leray spectral
sequence 
\[
H^k(C, R^\ell p_* p^*\cE) \Rightarrow H^{k+\ell}(X, p^*\cE).
\]
Here $\cE$ is the sheaf of germs of holomorphic sections of $E$,
whose cohomology is basically what we are calling $H^*(E)$.
By the projection formula, the $E_2$ term simplifies to give a 
Leray-Serre spectral 
sequence 
\[
H^k(C, H^\ell(F,\cO_F)\otimes \cE) = H^k(C, \cE) \otimes H^\ell(F,\cO_F)
\Rightarrow H^{k+\ell}(X, p^*\cE).
\]
The equality on the left follows from the fact that since the fibration is
topologically a product, there is no monodromy; i.e., $\pi_1(C)$
acts trivially on the cohomology of the fiber.  Furthermore, since
$C$ is a curve and $\cO_F$ has cohomology only up to dimension
$n-1$, the spectral sequence is concentrated in the range
$0\le k\le 1,\ 0\le\ell\le n-1$.  In particular there
can't be any differentials,
and the edge homomorphism $H^k(C, \cE) \to H^k(X, p^*\cE)$
is injective.  So we get a lower bound on the size of $H^0(p^*\cE)$.
In fact when $E$ is an ample line bundle (so that $H^k(C, \cE)=0$
for $k\ge 1$), then $\dim H^k(X, p^*\cE) = \deg(E)\binom{n-1}{k}$.

In a similar fashion, in the {\nc} context we have been talking about,
we should get a lower bound on the dimension of $H^0$ for
$\varphi_*(E) = E\otimes_\varphi \cA_\Th$. To see this, let
$\overline\nabla$ be a holomorphic connection on $E$, and
let $\bar\partial_1$ on $\cA_\Th$ correspond to $\bar\partial$
on $A_\theta$.  We define $\overline\nabla'$  on $\varphi_*(E)$
by $\overline\nabla'_1 = \overline\nabla\otimes 1
+ 1 \otimes \bar\partial_1$,
$\overline\nabla'_j = 1\otimes \bar\partial_j$ for $j>1$.
To check that this is consistent, observe that we have
the relation
$ea\otimes b=e\otimes \varphi(a)b$ for $e\in E$,
$a\in \cA_\theta$, $b\in \cA_\Theta$, but
\[
\begin{aligned}
\overline\nabla'_1 (ea\otimes b)& = \overline\nabla(ea)\otimes b
+ ea\otimes \bar\partial_1(b)\\
&= \bigl(\overline\nabla(e)a+e\bar\partial(a)\bigr)\otimes b
+ ea\otimes \bar\partial_1(b)\\
&=\overline\nabla(e)\otimes \varphi(a)b +
e \otimes\varphi(\bar\partial(a))b + e\otimes
\varphi(a)\bar\partial_1(b)\\&=
\bigl(\overline\nabla\otimes 1 + 1 \otimes \bar\partial_1\bigr)(e\otimes
\varphi(a)b) \\
&= \overline\nabla'_1 (e\otimes\varphi(a)b).
\end{aligned}
\]
And similarly, for $j>1$, since $\bar\partial_j$ kills the image
of $\varphi$,
\[
\begin{aligned}
  \overline\nabla'_j (ea\otimes b) &= ea\otimes \bar\partial_j(b)\\
  &= e \otimes \varphi(a)\bar\partial_j(b)\\
  &= e \otimes \bar\partial_j\bigl(\varphi(a)b\bigr)\\
  &= \overline\nabla'_j(e \otimes \varphi(a)b).
\end{aligned}
\]
The Leibniz rule for $\overline\nabla'$ is clear since it holds for
each $\bar\partial_j$.  Finally, we check that the flatness
condition holds.  Suppose $j,\,k>1$. Since $\bar\partial_j$ and
$\bar\partial_k$ commute, so do $\overline\nabla'_j$ and
$\overline\nabla'_k$. And
\[
\begin{aligned}
  \overline\nabla'_j\overline\nabla'_1 (e\otimes b)
  &= \overline\nabla'_j\bigl(\overline\nabla(e)\otimes b
  + e \otimes \bar\partial_1(b)\bigr)\\
  &= \overline\nabla(e)\otimes \bar\partial_j(b) +
  e \otimes \bar\partial_j(\bar\partial_1(b))\\
  &= \overline\nabla(e)\otimes \bar\partial_j(b) +
  e \otimes \bar\partial_1(\bar\partial_j(b))\\
  &= \overline\nabla'_1 (e\otimes \bar\partial_j(b))\\
  &= \overline\nabla'_1\overline\nabla'_j (e\otimes b).
\end{aligned}
\]
Thus $\overline\nabla'$ is indeed a flat holomorphic connection on
$\varphi_*(E)$.  Since it obviously annihilates $e\otimes 1$
for any $e\in \ker \overline\nabla$, we have proved the following:
\begin{theorem}
\label{thm:splittingcase}
Suppose $A_\Th$ is a {\nc} torus
with a holomorphic structure that ``splits,'' i.e., such that $J$ is of block
diagonal form, say $\begin{pmatrix} J_1 & 0 \\ 0 & J_2 \end{pmatrix} $,
with $J_1$ of size $2\times 2$.  Define the holomorphic injection
\[ \varphi\co
(A_\theta, J_1) \to (A_\Th, J),\quad \text{where }\theta = \Theta_{12},
\]
by $u_j\mapsto U_j$, with $u_j$, $j=1,\,2$, the canonical 
generators of $A_\theta$.  Then if $E$ is a holomorphic bundle
for $(\cA_\theta, J_1)$ with holomorphic connection $\overline\nabla$,
there is an induced flat holomorphic connection $\overline\nabla'$
on $\varphi_*(E) = E\otimes_\varphi \cA_\Theta$, and
$\dim H^0(\varphi_*(E)) \ge \dim H^0(E)$.
\end{theorem}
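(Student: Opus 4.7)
The heart of the argument, namely the verification that $\overline\nabla'$ is well-defined on the balanced tensor product, satisfies the Leibniz rule, and is flat, is already laid out in the displayed computations preceding the theorem. My plan is first to formalize those computations and then to treat the one genuinely new ingredient, the cohomological inequality. Well-definedness on $E\otimes_\varphi\cA_\Th$ is the calculation comparing $\overline\nabla'_1(ea\otimes b)$ with $\overline\nabla'_1(e\otimes\varphi(a)b)$; the analogous check for $j>1$ uses the crucial fact that $\bar\partial_j\circ\varphi = 0$ for $j>1$, which is exactly the consequence of $J$ being block diagonal with the $2\times 2$ block $J_1$ matching the image of $\varphi$. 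The Leibniz rule follows term by term from the Leibniz rules for $\overline\nabla$ and for each $\bar\partial_j$. For flatness, pairs $(j,k)$ with both $j,k>1$ commute because $\bar\partial_j$ and $\bar\partial_k$ do, and pairs $(1,j)$ with $j>1$ are handled by the displayed bracket computation, where again $\bar\partial_j\circ\varphi=0$ removes the only potentially obstructing term.

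For the cohomological part, I would define the candidate comparison map $\iota\co H^0(E)\to H^0(\varphi_*(E))$ by $e\mapsto e\otimes 1$. Since $\overline\nabla(e)=0$ and $\bar\partial_j(1)=0$ for every $j$, the formula for $\overline\nabla'$ shows immediately that $e\otimes 1\in\ker\overline\nabla'$, so $\iota$ does land where claimed. The inequality $\dim H^0(\varphi_*(E))\ge\dim H^0(E)$ then reduces to showing that $\iota$ is injective.

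Injectivity is the step I expect to require the most care, since projective modules can collapse under base change. My plan is to construct a module-level left inverse to $\iota$ using a conditional expectation. The splitting hypothesis on $\Th$ implies that the gauge action of $\bT^{2n-2}$ on the last $2n-2$ coordinates fixes precisely the subalgebra $\varphi(\cA_\theta)\subset\cA_\Th$ generated by $U_1,U_2$. Averaging over this $\bT^{2n-2}$-action therefore produces a conditional expectation $\cE\co\cA_\Th\to\varphi(\cA_\theta)$ that restricts to the identity on $\varphi(\cA_\theta)$. Because $\cE(\varphi(a)b)=\varphi(a)\cE(b)$, the assignment $e\otimes b\mapsto e\cdot\varphi^{-1}(\cE(b))$ descends to a well-defined map $\rho\co\varphi_*(E)\to E$, and $\rho\circ\iota=\id_E$, giving the required injectivity of $\iota$ and completing the proof.
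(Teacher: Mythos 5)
Your proposal is correct, and the core of it --- the well-definedness, Leibniz, and flatness checks for $\overline\nabla'$, and the observation that $e\otimes 1\in\ker\overline\nabla'$ whenever $e\in\ker\overline\nabla$ --- coincides exactly with the paper's proof, which consists precisely of the displayed computations preceding the theorem statement. Where you genuinely go beyond the paper is the injectivity of the comparison map $\iota\co e\mapsto e\otimes 1$. The paper passes over this point in silence (``Since it obviously annihilates $e\otimes 1$\dots we have proved the following''), yet the dimension inequality does require it, and for a balanced tensor product over a noncommutative ring it is not automatic. Your conditional-expectation argument is a valid and clean way to supply it: averaging over the gauge action of $\{1\}\times\{1\}\times\bT^{2n-2}$ does give a conditional expectation $\cE\co\cA_\Th\to\varphi(\cA_\theta)$ with the bimodule property $\cE(\varphi(a)b)=\varphi(a)\cE(b)$, which kills the relations of the balanced tensor product, so $\rho(e\otimes b)=e\cdot\varphi^{-1}(\cE(b))$ is well defined and is a left inverse to $\iota$. (A slightly shorter alternative: writing $E=p\,\cA_\theta^r$, one has $\varphi_*(E)\cong\varphi(p)\,\cA_\Th^r$ and $\iota$ is the restriction of the componentwise injection $\varphi^r\co\cA_\theta^r\to\cA_\Th^r$, so injectivity follows from injectivity of $\varphi$ together with projectivity of $E$; but your expectation argument is equally correct and more robust.) In short: same construction as the paper, plus a worthwhile repair of a step the paper leaves implicit.
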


\subsection{The Non-Algebraic Case}
\label{sec:nonalg}

Finally, we prove a result which is an analogue
of the fact that there exists a complex torus $\cA_\Th$ of complex
dimension $2$ with no non-constant meromorphic functions.
(This phenomenon is analyzed in detail in \cite{Wells}.)
\begin{theorem}
\label{thm:nonalg}
There is a noncommutative complex torus of complex dimension $2$
with the property that for any non-free standard projective
module with constant
curvature connection as in \cite{MR941652}, the associated constant
curvature holomorphic connection $\overline\nabla$ is non-flat.
\end{theorem}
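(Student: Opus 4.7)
The plan is a Baire-category argument carried out within Rieffel's framework of standard projective modules on irrational noncommutative tori. Specifically, I would fix $n=2$ and a skew-symmetric $\Th\in M_4(\bR)$ that is irrational in the sense of \cite{MR941652}, so that every finitely generated projective right $\cA_\Th$-module $\cE_\mu$ (labeled by its class $\mu\in K_0(A_\Th)$) is standard and carries a connection of constant curvature. The curvature is a real linear functional $\omega_\mu\in\bigwedge^2\fg^*$, and $\omega_\mu$ vanishes exactly when $\cE_\mu$ is free: indeed, if $\omega_\mu=0$ then by Chern--Weil all Chern character components of $\mu$ vanish, forcing $\mu$ (under the Elliott isomorphism $K_0(A_\Th)\cong K^0(\bT^4)$ used in the proof of Theorem~\ref{thm:RR}) to be a multiple of the class of the rank-$1$ free module, so $\cE_\mu$ is free. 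Consequently the set
\[
\Xi = \{\omega_\mu : \mu\in K_0(A_\Th),\ \cE_\mu\ \text{non-free}\}
\]
is a countable subset of $\bigwedge^2\fg^*_\bR\setminus\{0\}$.

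Next, I would translate flatness into a Hodge-type condition. If $\nabla$ denotes the constant-curvature connection on $\cE_\mu$ and $\overline\nabla$ its antiholomorphic part, then $(\overline\nabla)^2$ equals the $(0,2)$-component of the curvature, namely $\omega_\mu^{0,2}$. Hence $\overline\nabla$ is flat iff $\omega_\mu$ is of Hodge type $(1,1)$ with respect to $J$, and for each $J$ the real $(1,1)$-forms cut out a real subspace $V_J\subset\bigwedge^2\fg^*_\bR$ of dimension $n^2=4$, a proper codimension-$2$ subspace of the full $6$-dimensional space of real $2$-forms.

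Finally, I would exploit the freedom in $J$. The space $\mathcal{J}$ of complex structures on $\fg=\bR^4$ is $\GL(4,\bR)/\GL(2,\bC)$, a real-analytic manifold of real dimension $8$. For each fixed nonzero $\omega\in\bigwedge^2\fg^*_\bR$, the locus $\mathcal{J}_\omega = \{J\in\mathcal{J}: \omega\in V_J\}$ is cut out by the real-analytic equations $\omega^{0,2}=0$, and is a \emph{proper} subset of $\mathcal{J}$ because one can always exhibit some $J$ for which $\omega^{0,2}\ne 0$. Hence $\mathcal{J}_\omega$ is nowhere dense. By Baire's theorem the complement $\mathcal{J}\setminus\bigcup_{\omega\in\Xi}\mathcal{J}_\omega$ is residual, hence non-empty, and any $J$ in this complement yields the desired noncommutative complex torus $(A_\Th,J)$.

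The step I expect to be the main obstacle is verifying nonvanishing of $\omega_\mu$ for every non-free $\mu$: this is really the content of Rieffel's identification of the curvature $2$-form with the Chern character data, and requires care because \emph{a priori} a non-free standard module might have delicate cancellation in its mixed-degree Chern character. A lesser point is verifying properness of each $\mathcal{J}_\omega$, which is an elementary calculation: given $\omega\ne 0$, one can complete $\omega$ to a basis of $\bigwedge^2\fg^*$ and select $J$ mixing the underlying basis vectors of $\fg$ so that $\omega$ acquires a nonzero $(0,2)$-part.
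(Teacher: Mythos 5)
Your proposal is correct in substance and shares the paper's central mechanism: flatness of $\overline\nabla$ for a constant-curvature connection is equivalent to the curvature $2$-form being of type $(1,1)$ with respect to $J$, and one rules this out simultaneously for a countable family of nonzero forms by choosing the data generically. Where you diverge is in how you certify that the relevant curvature forms are nonzero. The paper reads this off from Rieffel's explicit formula in \cite[Theorem 4.5]{MR941652}: for height $p=1$ the curvature is a nonzero multiple of a decomposable integral form $\alpha\wedge\beta$, and for $p=2$ it is a nonzero multiple of the contraction of $\Theta$ against the integral volume form; the genericity conditions are then stated concretely in terms of $(J,\Theta)$. You instead argue indirectly that zero curvature would kill the higher Chern character components and hence force freeness. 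That chain is fillable but needs more care than you give it: the degree-$0$ component (the trace-rank) does \emph{not} vanish; the passage from ``higher components vanish'' to ``integer multiple of the free class'' uses that Elliott's Chern character is degree-triangular (essentially $e^{\iota_\Theta}$ applied to the topological Chern character); and ``same $K_0$-class as a free module implies free'' is Rieffel's cancellation theorem, with its own hypotheses on $\Theta$. The explicit curvature formula bypasses all of this, which is why the paper uses it. Two smaller points: your properness claim for $\mathcal{J}_\omega$ is correct and can be nailed down cleanly --- if $\omega(J\cdot,J\cdot)=\omega(\cdot,\cdot)$ for every complex structure $J$, then $\omega$ is invariant under the normal subgroup of $\GL(4,\bR)$ generated by a non-central conjugacy class, which contains $\SL(4,\bR)$, and $\bigwedge^2(\bR^4)^*$ is a nontrivial irreducible $\SL(4,\bR)$-module, so $\omega=0$; and since $\mathcal{J}=\GL(4,\bR)/\GL(2,\bC)$ has two connected components, you should verify nonvanishing of the analytic function on each before concluding nowhere-density. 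Your packaging does buy something the paper's statement of the proof leaves implicit: $\Theta$ may be fixed in advance (any irrational one) and only $J$ varied, whereas the paper's $p=2$ condition $\bar\partial_1\wedge\bar\partial_2\wedge\Theta\ne0$ is phrased as a joint genericity condition on $J$ and $\Theta$.
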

\begin{remark}
We would conjecture, though it seems hard to figure out how to prove
this, that one can even arrange for $\cA_\Th$ to admit no non-trivial
``meromorphic functions."
Here a possible definition of meromorphic function is the one from
\cite[\S6]{MR2444094}: it is a
formal quotient $u^{-1}v$,\footnote{Note that $u^{-1}$ is not assumed
  to exist in $A_\Th$.  That's why the quotient is ``formal.''
  However, it makes sense in a suitable localization of the algebra,
  as explained in \cite[\S6]{MR2444094}.}
where $u$ is neither a
left nor right zero divisor, and where $u$ and $v$ satisfy the
Cauchy-Riemann equations
\[
\bar\partial_j(u) = f_ju,\quad \bar\partial_j(v) = f_jv.
\]
\end{remark}
\begin{proof}[Proof of Theorem \ref{thm:nonalg}]
We use \cite[Theorem 4.5]{MR941652}, which gives an explicit formula
for the curvature for each standard vector bundle with its
constant curvature connection. Each such bundle has a height $p\le n$.
When $n=2$, there are three possibilities for $p$. The case $p=0$
just corresponds to a free module with the trivial flat connection.
So we only need to consider the cases $p=1$ and $p=2$. Let
$\bar\partial_1,\,\bar\partial_2$ be a basis for $\fg^{\text{antihol}}$.
The $(0,2)$ component of the curvature, evaluated on
$\bar\partial_1\wedge \bar\partial_2$, is a non-zero constant
multiplied by $\mu(\bar\partial_1\wedge \bar\partial_2 \wedge \Theta)$
when $p=2$, where $\mu$ is a generator of
$\bigwedge^4(\bZ^4)^*$, or is
$\mu(\bar\partial_1\wedge \bar\partial_2)$ when $p=1$, where in this
case $\mu$ is a decomposable element of $\bigwedge^2(\bZ^4)^*$.
Hence to get the desired conclusion we just need to choose
$J$ so that $\mu(\bar\partial_1\wedge \bar\partial_2)\ne 0$
for all $\mu = \alpha\wedge\beta$,
for any linearly independent $\alpha$ and $\beta$ in the lattice
$(\bZ^4)^*$, and then choose $\Th$ so that
$\bar\partial_1\wedge \bar\partial_2 \wedge \Th\ne 0$.
These conditions will be satisfied for ``generic'' $J$ and $\Th$
(not lying in a countable union of proper subvarieties in the
spaces of real $4\times 4$ matrices with $J^2=-1$ and $\Th = - \Th^t$).
\end{proof}
\begin{remark}
It should be obvious that the same method of proof will work for any
complex dimension $n>2$ as well.
\end{remark}  

\subsection{The Case of the Riemann Relations}
\label{sec:Riemann}

The last case we consider will be the one where $J$ satisfies the Riemann
conditions of Theorem \ref{thm:Riemann}.  We will try to give a
weak substitute for Theorem \ref{thm:Riemann1}, showing that
our {\nc} complex torus in this case admits a holomorphic bundle
with ``lots of holomorphic  sections.''
\begin{theorem}
\label{thm:NCRiemann}
Let $(A_\Th, J)$ be a {\nc} complex torus of complex dimension
$n$, and assume that the underlying $(\bZ^{2n}, J)$ admits
a Riemann form $S$.  Then there is a holomorphic bundle $E$
over  $(\cA_\Th, J)$ admitting a flat holomorphic connection,
with $\dim H^0(E)>1$.
\end{theorem}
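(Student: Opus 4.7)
The plan is to build a noncommutative analogue of the classical theta line bundle associated to the polarization $S$, and combine Theorem \ref{thm:RR} with a Kodaira--Nakano-type vanishing argument to extract holomorphic sections.

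First, I would use Rieffel's construction \cite{MR941652} of standard projective modules over $\cA_\Th$ equipped with constant curvature connections to produce a module $E$ whose curvature two-form equals $2S$. Rieffel's parametrization realizes all classes in $K_0(A_\Th)\cong \bigwedge^{\text{even}}\bZ^{2n}$, so in particular we may take $E$ to be a basic standard module with Chern character $e^{2S}$, whose top component is $(2S)^n/n! = 2^n\,\text{Pf}(S)\ge 2$ (the Pfaffian of any nondegenerate integer skew form is a positive integer, so multiplying by $2$ guarantees strict inequality).

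Second, I would verify that the antiholomorphic part of Rieffel's standard connection is flat. The Riemann-form identity $S(Jv,Jw)=S(v,w)$ is exactly the statement that $S$, viewed as an element of $\bigwedge^2\fg^*_\bC$, is of Hodge type $(1,1)$, so the $(0,2)$-component of the curvature of $E$ vanishes. As remarked in the paragraph following Definition \ref{def:holobundle}, it then follows that the antiholomorphic part $\overline\nabla$ of the standard connection satisfies $(\overline\nabla)^2=0$ and defines a flat holomorphic connection on $E$.

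Third, Theorem \ref{thm:RR} gives $\chi(E) = \Ch_{\text{top}}([E]) = 2^n\,\text{Pf}(S)\ge 2$. To convert this into a lower bound on $\dim H^0(E)$, I would then prove a noncommutative Kodaira--Nakano vanishing statement: $H^j(E)=0$ for $j>0$, driven by positivity of the hermitian form $H(v,w) = S(Jv,w)+iS(v,w)$ associated with $S$. Classically this vanishing follows from the K\"ahler identities together with a Bochner positivity estimate; both ingredients are essentially formal consequences of compatibility of the metric with the complex structure and should transfer to the $L^2(\Tr)$ Hilbert space framework set up in the proof of Theorem \ref{thm:Hodge}. Granting the vanishing, Theorem \ref{thm:Hodge} then yields $\dim H^0(E) = \chi(E)\ge 2$ as required.

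The step I expect to be the main obstacle is the noncommutative Bochner estimate itself: one must show that in the $L^2(\Tr)$ inner product on the Dolbeault complex of $E$, the curvature term dominates in positive Dolbeault degree, so that harmonic representatives of $H^j(E)$ must vanish for $j>0$. Should this prove too delicate to handle cleanly, a fallback strategy is to bypass vanishing entirely and construct at least two linearly independent holomorphic sections of $E$ explicitly, as noncommutative theta functions associated to the Heisenberg group action on Rieffel's standard module, in the spirit of Manin \cite{MR1827023} and Schwarz; dimension counting in that formalism likewise produces $2^n\,\text{Pf}(S)$ independent sections.
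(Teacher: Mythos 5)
Your strategy (realize $e^{2S}$ by a Rieffel standard module, check the standard connection is holomorphically flat because $S$ is of type $(1,1)$, then combine Riemann--Roch with a Kodaira--Nakano vanishing theorem) is not the paper's argument, and it has a genuine gap at its very first analytic step. The curvature of Rieffel's constant-curvature connection on a standard module is \emph{not} the topological form $2S$ you prescribe: by \cite[Theorem 4.5]{MR941652} it is an explicit linear functional on $\bigwedge^2\fg$ that, for modules of height $p\ge 2$ (and a module representing $e^{2S}$ with $S$ nondegenerate has full height $p=n$), involves $\Theta$-dependent corrections. This is exactly the mechanism exploited in the proof of Theorem \ref{thm:nonalg}: there the $(0,2)$-component of the curvature of a height-$2$ module is a nonzero multiple of $\mu(\bar\partial_1\wedge\bar\partial_2\wedge\Theta)$, which is generically nonzero \emph{regardless} of any $(1,1)$-condition on the underlying integral class. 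So the identity $S(Jv,Jw)=S(v,w)$ does not give you $(\overline\nabla)^2=0$ for the module you want, and without a flat holomorphic connection neither $H^j(E)$ nor the vanishing argument is even defined. On top of this, the noncommutative Kodaira--Nakano/Bochner estimate you invoke is asserted, not proved, and it is the entire content of the desired conclusion; the paper does not claim any such vanishing theorem, and indeed remarks after Theorem \ref{thm:NCRiemann} that the true ``ample line bundle'' (the tensor product over all the pieces of $S$) is not even available as an object in the general noncommutative setting.

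The paper's actual proof is deliberately more modest. Since the curvature of a \emph{height-one} standard module is a decomposable integral $2$-form $\mu=\alpha\wedge\beta$ (with no $\Theta$-correction), and since a nondegenerate $S$ can never be decomposable, the paper writes $S=S_1+\cdots+S_n$ with each $S_j$ decomposable and positive semidefinite of corank $n-1$ (via a canonical basis of the lattice), and uses only $S_1$. The resulting bundle factors through a noncommutative elliptic curve, so the problem reduces to the splitting case: Theorem \ref{thm:splittingcase} gives $\dim H^0(\varphi_*(E))\ge\dim H^0(E)$, and the one-dimensional theory of Polishchuk--Schwarz supplies a bundle on the elliptic curve with $\dim H^0>1$. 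The price is that the bundle produced has index $0$, so one gets only $\dim H^0(E)>1$ rather than your hoped-for $2^n\operatorname{Pf}(S)$. Your fallback of constructing noncommutative theta functions directly is closer in spirit to something that could strengthen the theorem, but it too presupposes a flat holomorphic structure on the full-polarization module, which is precisely what is unavailable.
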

\begin{proof}
We use some of the same argument that went into the
proof of Theorem \ref{thm:nonalg}.  Namely, we use
\cite[Theorem 4.5]{MR941652} to construct a standard module $E$ of
height $p=1$, but this time we arrange to have
$\mu(\bar\partial_j\wedge \bar\partial_k)= 0$ 
for some $\mu = \alpha\wedge\beta$, $\alpha$ and $\beta$ in the lattice
$\bZ^{2n}$, and for all $j,\,k=1,\cdots,n$.
This corresponds to the construction of an ample line
bundle with curvature of Hodge type $(1,1)$, which is the essence of
the proof in the classical case of Theorem \ref{thm:Riemann1}.

A technical problem here is that we can't just take
pairing with $\mu$ to be given by the Riemann form $S$, because since
$S$ is non-degenerate, it can't be decomposable.  (Any decomposable
$2$-form $\mu$ satisfies $\mu\wedge \mu = 0$, whereas
$\overbrace{S\wedge S\wedge \cdots \wedge S\,}^n\ne 0$.) However,
we can write $S=S_1+\cdots+S_n$ with $S_j$ decomposable for each $j$
and satisfying the same conditions as $S$ except that $S_j$ is the
imaginary part of a positive \emph{semidefinite} hermitian form $H_j$
with kernel of complex dimension $n-1$. (To get the decomposition
of $S$, merely choose a canonical basis of the lattice
$\Lambda\cong\bZ^{2n}$ as in \cite[Definition 3.2.2]{MR3525914},
and let $S_j$ be equal to $S$ on the span of $\nu_j$ and $\nu_{j+n}$
and $0$ on the span of the other basis vectors.)

Now take pairing with $\mu$ to be given by the alternating form $S_1$.
In the Riemann form way of looking at things, we have embedded
our lattice $\Lambda$ in $\fg^{\text{hol}}$, so that the complex
structure is given by multiplication by $i$. The alternating form $S_1$
and hermitian form $H_1$ factor through $\bC\nu_1$ with lattice
spanned by $\nu_1$ and $\nu_{n+1}$, and we are are reduced to the
``splitting case'' of Section \ref{sec:product2}. The conclusion
now follows from the case of complex dimension $1$ and Theorem
\ref{thm:splittingcase}.
\end{proof}
\begin{remark}
Unfortunately, to get the analogue of an ample line bundle
in the situation of Theorem \ref{thm:NCRiemann}, we really should
take the tensor product of the holomorphic bundles corresponding
to each of the $S_j$ (in the notation of the proof), but
we don't have a notion of tensor
product of bundles in the general {\nc} case.  The bundle constructed
in the proof of Theorem \ref{thm:NCRiemann} still has index $0$.

But there is one case where we can get an analogue of an ample line
bundle. If $\Theta$ splits as a block diagonal matrix with $2\times 2$
blocks, in the sense that (in the notation of the proof of Theorem
\ref{thm:NCRiemann}) $\nu_j$ and $\nu_{j+n}$ commute with
$\nu_k$ and $\nu_{k+n}$ for $j\ne k$, then we get a holomorphic map
from a tensor product of {\nc} elliptic curves into $(A_\Th, J)$
as in the product case of Section \ref{sec:product}, and the tensor
product of the holomorphic bundles corresponding to the various
$S_j$'s makes sense.  So in this special case we get something close
to the original theorem of Riemann.
\end{remark}

\bibliographystyle{amsplain}
\bibliography{holo}
\end{document}